\newtheorem{theorem}{Theorem}[section]
\newtheorem{lemma}[theorem]{Lemma}
\theoremstyle{definition}
\newtheorem{definition}[theorem]{Definition}
\newtheorem{example}[theorem]{Example}
\newtheorem{proposition}[theorem]{Proposition}
\newtheorem{corollary}[theorem]{Corollary}
\newtheorem{remark}[theorem]{Remark}
\newtheorem{conjecture}[theorem]{Conjecture}
\theoremstyle{remark}
\newcommand{\be}{\begin{equation}}
\newcommand{\ee}{\end{equation}}
\newcommand{\hooklongrightarrow}{\lhook\joinrel\longrightarrow}
\numberwithin{equation}{section}
\begin{document}
\title{The rigidity on the second fundamental form of projective manifolds}
\author{Ping Li}
%    Address of record for the research reported here
\address{School of Mathematical Sciences, Tongji University, Shanghai 200092, China}

\email{pingli@tongji.edu.cn\\
pinglimath@gmail.com}
%    \thanks will become a 1st page footnote.

\thanks{The author was partially supported by the National
Natural Science Foundation of China (Grant No. 11722109).}

\subjclass[2010]{53C24, 53C55, 14N30.}

%\date{January 1, 2001 and, in revised form, June 22, 2001.}

%\dedicatory{This paper is dedicated to our advisors.}
\keywords{the second fundamental form, minimal submanifold, projective manifold, complex projective space, complex hyperquadric, rational normal scroll, projective manifold of minimal degree, Segre embedding, degree, ample line bundle, sectional genus.}

\begin{abstract}
Let $M$ be a complex $n$-dimensional projective manifold in $\mathbb{P}^{n+r}$ endowed with the Fubini-Study metric of constant holomorphic sectional curvature $1$, $\sigma$ its second fundamental form, and $\underline{|\sigma|}^2$ the mean value of the squared length of $\sigma$ on $M$. We derive a formula for $\underline{|\sigma|}^2$ and
classify them when $\underline{|\sigma|}^2\leq2n$. We present several applications to these results. The first application is to confirm a conjecture of Loi and Zedda, which characterizes the linear subspace and the quadric in terms of the $L^2$-norm of $\sigma$. The second application is to improve a result of Cheng solving an old conjecture of Oguie from pointwise case to mean case. The third application is to give an optimal second gap value on $\underline{|\sigma|}^2$, which can be viewed as a complex analog to those on minimal submanifolds in the unit spheres.
%Our main tool used is the theory of polarized manifolds of small sectional genera, mainly due to Fujita.
%saying that if the $L^2$-norm $|\sigma|^2_{L^2}<2n\cdot\text{vol}(\mathbb{P}^n)$,then $M$ is totally geodesic and the equality holds if and only if it is congruent to the hyperquadric. The second application is to  which are precisely those projective manifolds of minimal degrees. One of the by-products of this classification result is
\end{abstract}

\maketitle
\section{Introduction}\label{section1}
To put our motivation and results into perspective, we start by recalling some results related to the second fundamental form of submanifolds in the unit spheres and standard complex projective spaces.

Let $M$ be an $n$-dimensional compact minimal submanifold in the unit sphere $S^{n+r}$ with second fundamental form $\sigma$. In a seminal paper \cite{Si}, Simons discovered a gap phenomenon for the squared length of $\sigma$: if $|\sigma|^2\leq n/(2-\frac{1}{r})$ everywhere on $M$, then either $\sigma\equiv0$, i.e., $M$ is totally geodesic, or $|\sigma|^2\equiv n/(2-\frac{1}{r})$. Soon afterwards Chern, do Carmo and Kobayashi (\cite{CDK}) showed that Simon's result is sharp and classified the equality case. The case of $r=1$ was also independently obtained by Lawson (\cite{La}). Chern also proposed to study the subsequent gaps for $|\sigma|^2$ (\cite[p. 42]{Ch}, \cite[p. 75]{CDK}) and this was collected by Yau in his famous problem section (\cite[p. 693]{Ya}). For minimal hypersurfaces in $S^{n+1}$, i.e., the case of codimension $r=1$, the investigation of the second gap for $|\sigma|^2$ was initiated by Peng and Terng (\cite{PT1}, \cite{PT2}) and, after some works (\cite{WX}, \cite{Zh}), their estimate was eventually improved by Ding and Xin (\cite{DX}) to show that for each $n$, there exists a positive constant $\delta(n)$ depending only on $n$ such that if $|\sigma|^2> n$, then $|\sigma|^2\geq n+\delta(n)$. Due to their method employed the gap $\delta(n)$ is by no means optimal and the conjectured optimal gap is $n$, i.e., if $|\sigma|^2> n$, then $|\sigma|^2\geq 2n,$ which remains open. It is known that the case of $2n$ can be achieved by Cartan's isoparametric minimal hypersurfaces in $S^{n+1}$.

It is well-known that any (holomorphically immersed) complex submanifold of a K\"{a}hler manifold is minimal and so is natural to exploit similar gap phenomena for complex submanifolds in complex projective spaces endowed with the Fubini-Study metric of constant holomorphic sectional curvature $1$. This was initiated by Ogiue in \cite{Og1} and shortly afterwards there appeared a series of related papers on this topic, whose main results have been summarized by Ogiue in his influential semi-expository paper \cite{Og2}. Ogiue also posed several open problems in \cite[p. 112]{Og2} to characterize the totally geodesic submanifolds in $\mathbb{P}^{n+r}$ in terms of various curvature pinching conditions, and some of them have been answered or partially answered (\cite{Che}, \cite{Ro}, \cite{RV}, \cite{Li1}, \cite{Li2}), to the author's best knowledge. In particular, Ogiue conjectured that (\cite[p. 112, Problem 3]{Og2}) if $|\sigma|^2<n$ everywhere on a complete complex $n$-dimensional holomorphically immersed
 submanifold $M$ in $\mathbb{P}^{n+r}$, then it must be totally geodesic. This was resolved by Cheng and Liao (\cite{Che}, \cite{Li1}) when $M$ is compact.

The previously-mentioned results indicate that, in the class of compact minimal (resp. complex) submanifolds in a sphere (resp. complex projective space), the totally geodesic submanifolds are isolated. %and some simple submanifolds can be characterized by suitable pinching constants on $|\sigma|^2$.
This motivates Gromov to conjecture in \cite{Gr} that every \emph{smooth} immersed map
of a compact smooth manifold into a compact quotient of the complex hyperbolic space, whose second fundamental form is small, is homotopic to a totally geodesic submanifold. Here the term ``small" is only qualitative and has not been precisely formulated. Besson, Courtois and Gallot answered it in \cite{BGG} in the holomorphic case in terms of the $L^2$ and $L^{2n}$ norms of the second fundamental form. To be precise, they showed that (\cite[p. 151]{BGG}) a holomorphic immersion of a compact complex $n$-dimensional K\"{a}hler manifold into a compact quotient of the complex hyperbolic space, whose $|\sigma|^{2}_{L^2}$ and $|\sigma|^{2}_{L^{2n}}$ are smaller than a positive constant depending only on $n$, is totally geodesic.

Various characterizations of $\mathbb{P}^n$ and the hyperquadric in $\mathbb{P}^{n+1}$ have a long history since the pioneer works of Hirzebruch-Kodaira, Brieskorn, and Kobayashi-Ochiai (\cite{HK}, \cite{Br}, \cite{KO}). Motivated by the result in \cite{BGG}, Loi and Zedda also addressed in \cite{LZ} the problem of finding the \emph{optimal} constant $c(n)$ depending only on $n$ to ensure that, if $|\sigma|^2_{L^2}<c(n)$ for an $n$-dimensional projective manifold in $\mathbb{P}^{n+r}$, then $M$ is totally geodesic, and formulated the following (\cite[p. 69]{LZ})
\begin{conjecture}[Loi-Zedda]\label{conj}
Let $M$ be an $n$-dimensional projective manifold in $\mathbb{P}^{n+r}$ with the induced metric. If \be\label{inequality1}
|\sigma|^2_{L^2}:=\int_M|\sigma|^2\ast1<2n
\cdot\text{Vol}(\mathbb{P}^n),\ee
 where $\ast1$ is the volume form and $\text{Vol}(\mathbb{P}^n)$ the volume of the standard $n$-dimensional $\mathbb{P}^n$ in $\mathbb{P}^{n+r}$, then $M$ is isomorphic to the $n$-dimensional hyperplane $\mathbb{P}^n$. And the equality holds if and only if $M$ is isomorphic to the complex quadric
\be\label{quadric}Q^n:=\Big\{[z^0:\cdots:z^{n+1}:
\underbrace{0:\cdots:0}_{r-1}]
\in~\mathbb{P}^{n+r}~\big|~(z^0)^2+\cdots+
(z^{n+1})^2=0\Big\}.\nonumber\ee
\end{conjecture}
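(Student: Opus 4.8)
The plan is to reduce the $L^2$-pinching hypothesis to a numerical condition on two projective invariants of $M$, its degree $d=\deg M$ and its sectional genus $g$, by first establishing the exact formula
\be\label{keyformula}
\int_M|\sigma|^2\ast1=2n(d+g-1)\cdot\text{Vol}(\mathbb{P}^n).
\ee
Granting this, the rigidity statement is elementary: the hypothesis \eqref{inequality1} reads $2n(d+g-1)<2n$, that is $d+g<2$, and since $d\geq1$ and $g\geq0$ this forces $d=1$ and $g=0$, whence $M$ is a variety of degree one, i.e.\ a linear $\mathbb{P}^n$.

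To establish \eqref{keyformula} I would combine three standard ingredients. First, the Gauss equation for a K\"ahler submanifold of the complex space form $\mathbb{P}^{n+r}$ of constant holomorphic sectional curvature $1$ expresses the scalar curvature $\rho$ of the induced metric pointwise as $\rho=a_n-|\sigma|^2$, where $a_n$ is a universal constant depending only on $n$; integrating and using $\text{Vol}(M)=d\cdot\text{Vol}(\mathbb{P}^n)$ reduces the task to computing $\int_M\rho\ast1$. Second, by Chern--Weil theory the total scalar curvature of a K\"ahler metric is proportional to $\int_M c_1(M)\wedge\omega^{n-1}$, and because the Fubini--Study form $\omega$ restricts to a fixed multiple of the hyperplane class $H$, this equals a constant multiple of the intersection number $c_1(M)\cdot H^{n-1}$. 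Third, adjunction gives $2g-2=(K_M+(n-1)H)\cdot H^{n-1}$, hence $c_1(M)\cdot H^{n-1}=(n-1)d-2g+2$. Assembling the three steps produces \eqref{keyformula}, and I would fix the universal constants by evaluating both sides on the two calibrating examples $M=\mathbb{P}^n$ (with $\sigma\equiv0$, $d=1$, $g=0$) and $M=Q^n$ (with $|\sigma|^2\equiv n$, $d=2$, $g=0$), which simultaneously serves as a consistency check.

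In the boundary case $\int_M|\sigma|^2\ast1=2n\cdot\text{Vol}(\mathbb{P}^n)$, formula \eqref{keyformula} gives $d+g=2$. The only possibilities with $d\geq1$ and $g\geq0$ are $(d,g)=(2,0)$ and $(d,g)=(1,1)$; the latter cannot occur, since $d=1$ already forces $M\cong\mathbb{P}^n$ and hence $g=0$. Thus $M$ is a smooth variety of degree $2$. Writing $\mathbb{P}^s$ for its linear span, the minimal-degree inequality $\deg M\geq s-n+1$ yields $s\leq n+1$, while $\dim M=n$ excludes $s\leq n$ (which would make $M=\mathbb{P}^n$ of degree $1$); hence $s=n+1$ and $M$ is a smooth quadric hypersurface, projectively equivalent to $Q^n$. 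Note that $|\sigma|^2$ is unchanged under the totally geodesic inclusion $\mathbb{P}^s\subset\mathbb{P}^{n+r}$, so \eqref{keyformula} is insensitive to whether $M$ is linearly degenerate.

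The steps demanding the most care are the normalization bookkeeping in the first ingredient --- producing the correct constant $a_n$ for holomorphic sectional curvature exactly $1$ and the precise proportionality $[\omega]=c\cdot H$ --- and, on the algebraic side, the inequality $g\geq0$, which rests on the fact that a general linear curve section of the connected smooth variety $M$ is again connected (Bertini together with ampleness of $\mathcal{O}_M(1)$). The conceptual core, however, is formula \eqref{keyformula} itself: once the analytic quantity $\int_M|\sigma|^2$ is identified with the topological expression $2n(d+g-1)\cdot\text{Vol}(\mathbb{P}^n)$, both the rigidity assertion and its equality case follow from the integrality of $d$ and $g$ together with the classical classification of varieties of minimal degree.
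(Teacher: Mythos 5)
Your proof is correct, and its skeleton is the same as the paper's: everything is reduced, via the identity $\int_M|\sigma|^2\ast 1=2n(d+g-1)\cdot\mathrm{Vol}(\mathbb{P}^n)$ (the paper's Theorem \ref{firstresult} combined with $\mathrm{Vol}(M)=d\cdot\mathrm{Vol}(\mathbb{P}^n)$), to the arithmetic of $d+g<2$, resp.\ $d+g=2$, with $d\geq1$ and $g\geq0$; and your derivation of that identity (Gauss equation, total scalar curvature as a multiple of $c_1(M)\cdot H^{n-1}$, adjunction) is exactly the chain (\ref{4}), (\ref{3}), (\ref{sectional genus}) used in Section \ref{section4}. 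The genuine differences lie in two supporting steps. First, where the paper invokes Fujita's theorem (Theorem \ref{Fujita1}) that $g(L)\geq0$ for every \emph{ample} $L$, you obtain nonnegativity from Bertini: a general curve section of $M$ is a smooth connected curve whose genus equals $g(L)$ by iterated adjunction. This is legitimate because $L$ here is very ample, and it makes your proof of this corollary self-contained, independent of the adjunction-theory machinery the paper leans on. Second, in the equality case the paper asserts that $(d,g)=(2,0)$ is the ``only solution'' of $d+g=2$, which is slightly imprecise: $(1,1)$ is also an arithmetic solution and must be excluded exactly as you do, by noting that $d=1$ forces $M\cong\mathbb{P}^n$ and hence $g=0$; you then identify the degree-two variety as a smooth quadric hypersurface in its linear span via the minimal-degree inequality, rather than leaving this to a citation. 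The one step you leave informal is fixing the universal constants by calibrating on $\mathbb{P}^n$ and $Q^n$; this is sound (the two examples give independent linear conditions determining both constants), though the paper's direct computation with the normalized Fubini--Study form, in which $c_1(L)=[\omega]$ and $\int_M\omega^n=d(M)$, pins down the normalization without appeal to examples.
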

\begin{remark}
Loi and Zedda verified Conjecture \ref{conj} in the cases of $n=1$, $M$ is a complete intersection and $|\sigma|^2$ is constant (\cite[Theorem 3]{LZ}).
\end{remark}

The key idea in Simons's paper \cite{Si} is to calculate the Laplacian of $|\sigma|^2$ and estimate some terms involved to produce the desired results. This idea was more or less inherited by most papers related to the second fundamental form of complex submanifolds in complex projective spaces (e.g., those summarized in \cite{Og2}), but with one exception in \cite{Che}, where some arguments are of algeo-geometric nature. This motivates us to apply deeper results in algebraic geometry to treat this kind of problems, and the main tools employed in our paper are some classification results in the adjunction theory of algebraic geometry, mainly due to Fujita (\cite{Fu}).

The rest of this paper is organized as follows. The main results and several of their corollaries are stated in Section \ref{section2}. In Section \ref{section3} we briefly recall the notions of sectional genus and $\Delta$-genus in algebraic geometry and some classification results in terms of them, mainly due to Fujita. Then Sections \ref{section4}, \ref{section5} and \ref{section4.3} are devoted to the proofs of various results described in Section \ref{section2}.

\section*{Acknowledgements}
The author wishes to thank Professor Yuan-Long Xin for drawing his attention to this kind of rigidity results in minimal submanifold theory.

\section{Main results}\label{section2}
Our first observation is the following formula for $\underline{|\sigma|}^2$.
\begin{theorem}\label{firstresult}
Let $M\overset{i}{\hooklongrightarrow}\mathbb{P}^{n+r}$ be an $n$-dimensional projective manifold in $\mathbb{P}^{n+r}$ with
the induced metric, $\sigma$ the second fundamental form of $M$, and $L$ the hyperplane section bundle on $M$, i.e., $L=i^{\ast}\big(\mathcal{O}_{\mathbb{P}^{n+r}}(1)\big)$. Then
\be\label{fundamentalformformula}\underline{|\sigma|}^2=2n
\big[1+\frac{g(L)-1}{d(M)}\big],\ee
where $d(M)$ is the degree of $M$ in $\mathbb{P}^{n+r}$ and $g(L)\in\mathbb{Z}$ the sectional genus of $L$.
\end{theorem}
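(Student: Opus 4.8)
The plan is to reduce the computation of $\underline{|\sigma|}^2$ to two ingredients: a pointwise Gauss-equation identity expressing $|\sigma|^2$ through the scalar curvature of the induced metric, and a Chern--Weil interpretation of the integrated scalar curvature, after which the algebro-geometric quantities $d(M)$ and $g(L)$ enter as intersection numbers. First I would write down the Gauss equation for the Kähler immersion $i\colon M\hooklongrightarrow\mathbb{P}^{n+r}$. Choosing a local unitary frame $\{e_i\}$ of $T^{1,0}M$ and $\{e_\alpha\}$ of the holomorphic normal bundle, and using that the ambient space has constant holomorphic sectional curvature $1$, so that $\tilde R_{i\bar jk\bar l}=\tfrac12(\delta_{ij}\delta_{kl}+\delta_{il}\delta_{kj})$, the Gauss equation gives $R_{i\bar j}=\tfrac{n+1}{2}\delta_{ij}-\sum_{k,\alpha}\sigma^\alpha_{ik}\overline{\sigma^\alpha_{jk}}$. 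Tracing over $i=j$ yields the key pointwise identity
\[ |\sigma|^2 = n(n+1) - 2\rho, \]
where $\rho=\sum_iR_{i\bar i}$ is the scalar curvature of the induced metric and $|\sigma|^2=2\sum_{\alpha,i,j}|\sigma^\alpha_{ij}|^2$ is the squared length of the (real) second fundamental form.

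Next I would integrate this identity over $M$ and convert the scalar-curvature integral into intersection numbers by Chern--Weil theory. Writing $\omega=i^\ast\omega_{FS}$ for the induced Kähler form and $\ast1=\omega^n/n!$, the normalization ``holomorphic sectional curvature $1$'' fixes the de Rham class $[\omega]=4\pi\,c_1(L)$ (as one checks already on $\mathbb{P}^1$, a round sphere of area $4\pi$), whence $\text{Vol}(M)=\int_M\omega^n/n!=(4\pi)^n d(M)/n!$ with $d(M)=L^n$. Using the pointwise identity $\text{Ric}_\omega\wedge\omega^{n-1}/(n-1)!=\rho\,\omega^n/n!$ together with $[\text{Ric}_\omega]=2\pi c_1(M)$, the mean scalar curvature becomes
\[ \bar\rho=\frac{\int_M\rho\ast1}{\text{Vol}(M)}=\frac{n}{2}\cdot\frac{c_1(M)\cdot L^{n-1}}{d(M)}. \]
Dividing the integrated pointwise identity by $\text{Vol}(M)$ then gives $\underline{|\sigma|}^2=n(n+1)-n\,(c_1(M)\cdot L^{n-1})/d(M)$; the universal constant $4\pi$ enters numerator and denominator consistently, so only the combinatorial factor survives.

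Finally I would translate this into the stated form using the definition of the sectional genus, $2g(L)-2=(K_M+(n-1)L)\cdot L^{n-1}$, together with $K_M=-c_1(M)$, which give $c_1(M)\cdot L^{n-1}=(n-1)d(M)-2\bigl(g(L)-1\bigr)$. Substituting and simplifying collapses $n(n+1)-n(n-1)=2n$ and produces exactly $\underline{|\sigma|}^2=2n\bigl[1+(g(L)-1)/d(M)\bigr]$. I expect the main obstacle to be purely one of normalization bookkeeping: the single convention ``holomorphic sectional curvature $1$'' must be fed consistently into both the ambient curvature tensor in the Gauss equation and the Kähler class $[\omega]=4\pi c_1(L)$, and one must track the factor of $2$ relating the Hermitian quantity $\sum|\sigma^\alpha_{ij}|^2$ to the real squared length $|\sigma|^2$. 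As sanity checks I would verify the totally geodesic $\mathbb{P}^n$ ($d=1$, $g(L)=0$, giving $\underline{|\sigma|}^2=0$) and the quadric $Q^n$ ($d=2$, $g(L)=0$, giving $\underline{|\sigma|}^2=n$), the latter matching the equality case in Conjecture \ref{conj} since $\text{Vol}(Q^n)=2\,\text{Vol}(\mathbb{P}^n)$.
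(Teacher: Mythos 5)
Your proposal is correct and follows essentially the same route as the paper: the pointwise Gauss-equation identity $|\sigma|^2=n(n+1)-S_g$, integration of the scalar curvature via the identity $S_g\,\omega^n=n\,\mathrm{Ric}\wedge\omega^{n-1}$, conversion to the intersection numbers $c_1(M)\cdot L^{n-1}$ and $L^n$, and finally the definition of the sectional genus. The only difference is bookkeeping: the paper works with the normalized forms $[\omega]=c_1(L)$ and $[\mathrm{Ric}(g)]=c_1(K_M^{-1})$ from the start, so the factors of $4\pi$ and $2\pi$ that you track explicitly never appear.
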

\begin{remark}
The notion of \emph{sectional genus} of a line bundle comes from algebraic geometry and
%a basic property is that $g(L)\geq0$ if $L$ is ample, which is satisfied in our situation.
more related details shall be explained in Section \ref{section3}. By definition the degree of $M$ is nothing but $L^n$: $d(M)=L^n$.
\end{remark}

In order to state the classification result, we recall in the next example some special polarized manifolds, which are called \emph{rational normal scrolls} in the literature. We refer the reader to \cite[p. 5-7]{EH1} or \cite[\S 9.1.1]{EH2} for more details.

\begin{example}[Rational normal scroll]
Let $\varepsilon$ be a direct sum of $n$ line bundles of positive degrees over $\mathbb{P}^1$, i.e.,
$$\varepsilon=\bigoplus_{i=1}^n\mathcal{O}_{\mathbb{P}^1}(a_i),\qquad a_i\in\mathbb{Z}_{>0}.$$
Write $\mathbb{P}(\varepsilon)$ for the projectivization of $\varepsilon$ and let $\mathcal{O}_{\mathbb{P}(\varepsilon)}(1)$ be the tautological line bundle on $\mathbb{P}(\varepsilon)$, which is very ample under the conditions that all the degrees $a_i>0$. Denote for our later convenience
\be\label{notatinforscorll}
\big(S(a_1,\ldots,a_n),\mathcal{O}(1)\big):=
\big(\mathbb{P}(\varepsilon),
\mathcal{O}_{\mathbb{P}(\varepsilon)}(1)\big)\ee
and call this polarized pair a \emph{rational normal scroll}.
\end{example}

With this notion in hand
we are able to classify $M$ where $\underline{|\sigma|}^2\leq2n$. Since the classification  for $\underline{|\sigma|}^2<2n$ can be made more explicitly and the related applications also focus on it,  in the sequel we separately describe the two cases of ``$\underline{|\sigma|}^2<2n$" and ``$\underline{|\sigma|}^2=2n$".
\begin{theorem}\label{secondresult}
Let $(M,L)$ be as in Theorem \ref{firstresult} and  assume that $\underline{|\sigma|}^2<2n.$ Then the pair $(M,L)$ is isomorphic to
\begin{enumerate}
\item
$(\mathbb{P}^n,\mathcal{O}_{\mathbb{P}^n}(1))$, in which case $\sigma\equiv0$ and $d(M)=1$;

\item
$(Q^n,\mathcal{O}_{Q^n}(1))$, in which case $|\sigma|^2\equiv n$, $d(M)=2$ and the codimension $r\geq 1$;

\item
the Veronese surface $\big(\mathbb{P}^2,\mathcal{O}_{\mathbb{P}^2}(2)\big)$, in which case $\underline{|\sigma|}^2=3$, $d(M)=4$ and the codimension $r\geq 3$;

or

\item
one of the rational normal scrolls $\big(S(a_1,\ldots,a_n),\mathcal{O}(1)\big)$ in the notation of (\ref{notatinforscorll}) with all $a_i>0$, in which case
\begin{eqnarray}\label{norm of scroll}
\left\{ \begin{array}{ll}
\underline{|\sigma|}^2=2n(1-\frac{1}
{\sum_{i=1}^na_i})\\
~\\
d(M)=\sum_{i=1}^na_i,\\
\end{array} \right.
\end{eqnarray}
and the codimension $r\geq-1+\sum_{i=1}^na_i$.
\end{enumerate}

Furthermore, in the above cases, the lower bounds of the codimension $r$ are exactly realized by the Kodaira maps of these very ample line bundles $L$.
\end{theorem}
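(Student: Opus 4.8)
The plan is to translate the analytic hypothesis $\underline{|\sigma|}^2<2n$ into a statement about the sectional genus and then invoke the adjunction-theoretic classification of Fujita. First I would substitute into the formula \eqref{fundamentalformformula} of Theorem \ref{firstresult}: since $d(M)=L^n>0$, the inequality $2n\big[1+\frac{g(L)-1}{d(M)}\big]<2n$ is equivalent to $g(L)-1<0$, that is $g(L)<1$. Because the sectional genus is an integer, this forces $g(L)\le 0$, and combining it with Fujita's basic inequality $g(L)\ge 0$ valid for every polarized manifold (to be recalled in Section \ref{section3}) yields $g(L)=0$. Thus the smallness of the second fundamental form is \emph{exactly} equivalent to the vanishing of the sectional genus of $(M,L)$.

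The core of the proof is then the classification of polarized manifolds with $g(L)=0$. I would quote Fujita's theorem in the form to be set up in Section \ref{section3}: if $g(L)=0$ then $\Delta(M,L)=0$, the line bundle $L$ is very ample, and $(M,L)$ is a variety of minimal degree; by the del Pezzo--Bertini list, and since $M$ is smooth so that all cones are excluded, $(M,L)$ must be isomorphic to exactly one of $(\mathbb{P}^n,\mathcal{O}_{\mathbb{P}^n}(1))$, $(Q^n,\mathcal{O}_{Q^n}(1))$, the Veronese surface $(\mathbb{P}^2,\mathcal{O}_{\mathbb{P}^2}(2))$, or a rational normal scroll $(S(a_1,\ldots,a_n),\mathcal{O}(1))$ with all $a_i>0$. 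This recovers the four cases. I expect this classification step to be the principal obstacle: it is the only place where serious input is needed, so the real task is to arrange Section \ref{section3} so that Fujita's results apply verbatim to our pair $(M,L)$; everything surrounding it is bookkeeping.

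With the four cases in hand the numerical assertions are immediate. Putting $g(L)=0$ into \eqref{fundamentalformformula} gives the uniform value $\underline{|\sigma|}^2=2n\big(1-\tfrac{1}{d(M)}\big)$, into which I insert the degrees $d(M)=L^n$ computed case by case: $d=1$ for $\mathbb{P}^n$, whence $\underline{|\sigma|}^2=0$ and $\sigma\equiv0$; $d=2$ for $Q^n$, whence $\underline{|\sigma|}^2=n$; $d=4$ for the Veronese surface, whence $\underline{|\sigma|}^2=3$; and $d=\sum_{i}a_i$ for the scroll, giving \eqref{norm of scroll}. The sharper pointwise statements, namely $\sigma\equiv0$ on the linear $\mathbb{P}^n$ and the constancy $|\sigma|^2\equiv n$ on $Q^n$, are classical facts about these symmetric embeddings and can be recorded directly.

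For the last sentence on codimension I would compute $h^0(M,L)$ in each case: it equals $n+1$, $n+2$, $6$, and $h^0(\mathbb{P}^1,\bigoplus_i\mathcal{O}_{\mathbb{P}^1}(a_i))=\sum_i a_i+n$ respectively. Since $L$ is very ample, the Kodaira map $\varphi_{|L|}$ embeds $M$ as a linearly normal variety of minimal degree into $\mathbb{P}^{h^0(L)-1}$, so its codimension is $h^0(L)-1-n=d(M)-1$, equal to $0$, $1$, $3$, and $\sum_i a_i-1$. These are precisely the stated lower bounds for $r$, realized exactly by the Kodaira maps, which completes the argument.
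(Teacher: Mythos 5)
Your reduction to Fujita's theory and your case-by-case numerics coincide with the paper's own argument: formula (\ref{fundamentalformformula}) plus integrality of $g(L)$ gives $g(L)\le 0$, Theorem \ref{Fujita1} gives $g(L)\ge 0$ and the equivalence $g(L)=0\Leftrightarrow\Delta(L)=0$, and Theorem \ref{Fujita2} then yields the four pairs; substituting $g(L)=0$ back into (\ref{fundamentalformformula}) gives $\underline{|\sigma|}^2=2n\big(1-\frac{1}{d(M)}\big)$ and the stated values. This is exactly the content of the first two lemmas of Section \ref{section5} (the paper additionally deduces the pointwise identity $|\sigma|^2\equiv n$ on $Q^n$ from $S_g\equiv n^2$ together with (\ref{4}), which you are entitled to quote as classical), so up to this point your proposal is correct and follows the same route.

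The gap is in the codimension assertions. The theorem claims lower bounds on $r$ for \emph{every} embedding $M\hooklongrightarrow\mathbb{P}^{n+r}$ satisfying the hypothesis, \emph{and} that these bounds are attained by the Kodaira maps. Your argument computes $h^0(M,L)$ and concludes that the Kodaira embedding has codimension $h^0(L)-1-n=d(M)-1$; that is only the attainment half. Nothing in your proposal shows that an embedding given by a \emph{proper} linear subsystem of $|L|$ (an isomorphic projection of the Kodaira embedding) cannot realize the same pair $(M,L)$ in smaller codimension, which is what the inequalities $r\ge 1$, $r\ge 3$, $r\ge -1+\sum_i a_i$ actually assert. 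The paper devotes a separate lemma to this: it passes to the minimal codimension $r_m$, notes it is realized by a linearly nondegenerate embedding, invokes \cite[Prop.~0]{EH1} to get $r_m\le L^n-1$, and then appeals to the Eisenbud--Harris classification of varieties of minimal degree \cite[Thm.~1]{EH1} to identify $r_m$ with $L^n-1$. Far from being bookkeeping, this is the most delicate point of the whole statement, because excluding isomorphic projections requires control of secant varieties. Indeed it is a classical fact that the secant variety of the Veronese surface in $\mathbb{P}^5$ is a (deficient) cubic hypersurface, so projection from a general point embeds $(\mathbb{P}^2,\mathcal{O}_{\mathbb{P}^2}(2))$ smoothly into $\mathbb{P}^4$ with codimension $2$; since by Theorem \ref{firstresult} the quantity $\underline{|\sigma|}^2=3$ depends only on the pair $(M,L)$ and not on the embedding, this projected Veronese satisfies the hypothesis of the theorem. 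So the lower-bound half cannot be waved through from the Kodaira-map computation, and any complete proof (the paper's included) must confront exactly this phenomenon; your proposal never engages with it.
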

\begin{remark}\label{remark2.6}
The four classes above are disjoint with exactly one exception $\big(Q^2,\mathcal{O}_{Q^2}(1)\big)$, which is also isomorphic to the rational normal scroll $\big(S(1,1), \mathcal{O}(1)\big)$.
\end{remark}

Observe from (\ref{fundamentalformformula}) that the pairs $(M,L)$ where $\underline{|\sigma|}^2=2n$ correspond exactly to those with sectional genera $1$, which have been classified by Fujita (\cite[p. 107]{Fu}). So consequently we have
\begin{proposition}
Let $(M,L)$ be as above. If $\underline{|\sigma|}^2=2n$, then the pair $(M,L)$ is either
\begin{enumerate}
\item
a del Pezzo manifold, i.e., the canonical line bundle $K_M=(1-n)L$;

or

\item
a scroll over an elliptic curve, i.e., $M$ is the projectivization of a vector bundle on an elliptic curve and $L$ its tautological line bundle.
\end{enumerate}
\end{proposition}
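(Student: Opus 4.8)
The plan is to convert the analytic hypothesis $\underline{|\sigma|}^2=2n$ into an algebro-geometric one and then quote Fujita's classification. The essential step is already flagged in the sentence preceding the proposition: by formula (\ref{fundamentalformformula}) of Theorem \ref{firstresult} we have
\[
\underline{|\sigma|}^2=2n\Big[1+\frac{g(L)-1}{d(M)}\Big],
\]
and since the degree $d(M)=L^n$ is a strictly positive integer, the equality $\underline{|\sigma|}^2=2n$ holds \emph{if and only if} the correction term vanishes, that is, if and only if the sectional genus satisfies $g(L)=1$. Thus the proposition reduces to the purely algebro-geometric statement that a polarized manifold $(M,L)$ of sectional genus one is either a del Pezzo manifold or a scroll over an elliptic curve.

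To organize this I would first recall the defining relation for the sectional genus, namely
\[
2g(L)-2=\big(K_M+(n-1)L\big)\cdot L^{n-1},
\]
so that the condition $g(L)=1$ becomes $\big(K_M+(n-1)L\big)\cdot L^{n-1}=0$. This makes the two listed families transparent: for a del Pezzo manifold one has $K_M=(1-n)L$, hence $K_M+(n-1)L=0$ and $g(L)=1$ automatically; for a scroll $(\mathbb{P}(E),\mathcal{O}_{\mathbb{P}(E)}(1))$ over a smooth curve $C$ the sectional genus equals the genus of the base $C$, since a general curve section is isomorphic to $C$, so the scrolls with $g(L)=1$ are exactly those over an elliptic curve. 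This verifies that both families satisfy the hypothesis, and I would then invoke Fujita's classification of polarized manifolds of sectional genus one (\cite[p. 107]{Fu}) to obtain the converse.

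The genuine content, and the only real obstacle, is this converse direction: that \emph{every} $(M,L)$ with $\big(K_M+(n-1)L\big)\cdot L^{n-1}=0$ falls into one of these two classes. This is precisely Fujita's structure theorem, whose proof rests on his adjunction-theoretic analysis of the nef value and the reduction map rather than on any of the differential-geometric machinery of the present paper, and it lies well beyond the elementary computation above. Accordingly my proof would consist only of the one-line reduction to $g(L)=1$ via (\ref{fundamentalformformula}), followed by a citation of Fujita's result; no further estimate on $\sigma$ is required once Theorem \ref{firstresult} is in hand.
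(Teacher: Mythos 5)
Your proposal is correct and follows exactly the paper's own route: the paper likewise observes that formula (\ref{fundamentalformformula}), together with $d(M)=L^n>0$, forces $\underline{|\sigma|}^2=2n$ to be equivalent to $g(L)=1$, and then invokes Fujita's classification of polarized manifolds of sectional genus one (\cite[p.~107]{Fu}). Your additional check that both listed families indeed have $g(L)=1$ is a harmless (and reasonable) supplement, but the core argument is identical.
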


Next we present several applications to Theorems \ref{firstresult} and \ref{secondresult}, whose detailed proofs shall be given in Section \ref{section4.3}.

The first one is a confirmation of Loi and Zedda's Conjecture \ref{conj}:
\begin{corollary}\label{thirdresult}
Conjecture \ref{conj} is true.
\end{corollary}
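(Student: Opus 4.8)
The plan is to translate the $L^2$-condition appearing in Conjecture \ref{conj} into the mean-value quantity $\underline{|\sigma|}^2$ and then feed it into the formula of Theorem \ref{firstresult} and the classification of Theorem \ref{secondresult}. First I would record the elementary relation between the $L^2$-norm and the mean value,
$$\int_M|\sigma|^2\ast1=\underline{|\sigma|}^2\cdot\text{Vol}(M).$$
The crucial geometric input here is Wirtinger's theorem together with the fact that the degree is the volume ratio: since $M$ and a linear $\mathbb{P}^n$ satisfy $[M]=d(M)[\mathbb{P}^n]$ in $H_{2n}(\mathbb{P}^{n+r})$ and the Fubini--Study K\"ahler form is fixed, integrating $\omega^n/n!$ gives $\text{Vol}(M)=d(M)\cdot\text{Vol}(\mathbb{P}^n)$. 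Combining this with Theorem \ref{firstresult} yields the clean identity
$$\int_M|\sigma|^2\ast1=2n\big[d(M)+g(L)-1\big]\cdot\text{Vol}(\mathbb{P}^n).$$

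With this identity the inequality (\ref{inequality1}) becomes purely numerical: it is equivalent to $d(M)+g(L)-1<1$, i.e. $d(M)+g(L)<2$. Now $d(M)$ is a positive integer and $g(L)$ is a non-negative integer (being the genus of a generic curve section, as recalled in Section \ref{section3}), so $d(M)+g(L)$ is an integer $\geq1$; the strict inequality therefore forces $d(M)+g(L)=1$, hence $d(M)=1$ and $g(L)=0$. Plugging back into (\ref{fundamentalformformula}) gives $\underline{|\sigma|}^2=0$, so $\sigma\equiv0$ and $M$ is totally geodesic in $\mathbb{P}^{n+r}$; equivalently $d(M)=1$ already exhibits $M$ as a linear subspace, i.e. $M\cong\mathbb{P}^n$. (One may instead simply invoke case (1) of Theorem \ref{secondresult}.)

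For the equality case $\int_M|\sigma|^2\ast1=2n\cdot\text{Vol}(\mathbb{P}^n)$, the same computation gives $d(M)+g(L)=2$. The only integer solutions with $d(M)\geq1$, $g(L)\geq0$ are $(d,g)=(1,1)$ and $(d,g)=(2,0)$. I would rule out the first: $d(M)=1$ forces $M$ to be a linear $\mathbb{P}^n$, whose sectional genus is $0\neq1$. This leaves $d(M)=2$, $g(L)=0$, for which (\ref{fundamentalformformula}) gives $\underline{|\sigma|}^2=n<2n$, so Theorem \ref{secondresult} applies. Among its four classes the degree equals $2$ only in case (2), the quadric $Q^n$ (the scroll of case (4) can have degree $2$ only for small $n$, and then coincides with $Q^n$ by Remark \ref{remark2.6}); hence equality holds precisely when $M\cong Q^n$.

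The whole argument is short once the formula of Theorem \ref{firstresult} and the classification of Theorem \ref{secondresult} are in hand, so the only step requiring genuine care is the normalization identity $\text{Vol}(M)=d(M)\cdot\text{Vol}(\mathbb{P}^n)$, which tethers the analytic $L^2$-norm to the algebraic degree; everything else is integrality bookkeeping on $d(M)+g(L)$ plus an appeal to the already-proven classification. I expect the main obstacle, insofar as there is one, to be purely expository: making sure that the constant-holomorphic-sectional-curvature-$1$ normalization of the Fubini--Study metric is exactly the one under which this volume identity holds with no stray constants, so that the numerical threshold $2n\cdot\text{Vol}(\mathbb{P}^n)$ lines up precisely with the value $d(M)+g(L)=2$.
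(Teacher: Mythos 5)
Your proposal is correct and follows essentially the same route as the paper: convert the $L^2$-bound into the mean-value statement via $\mathrm{Vol}(M)=d(M)\cdot\mathrm{Vol}(\mathbb{P}^n)$, combine with Theorem \ref{firstresult} to get $d(M)+g(L)<2$ (resp. $=2$), and finish by integrality, $g(L)\geq 0$, and the classification. The only difference is cosmetic: you explicitly rule out the solution $(d(M),g(L))=(1,1)$ in the equality case (via $d=1\Rightarrow M$ linear $\Rightarrow g=0$), a point the paper passes over silently.
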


As mentioned above, Cheng and Liao showed that (\cite{Che}, \cite{Li1}), if for an $n$-dimensional compact (holomorphically immersed) complex submanifold whose $|\sigma|^2<n$ \emph{everywhere} on $M$, then $M$ is totally geodesic, which confirmed a conjecture of Oguie (\cite[p. 112, Prob. 3]{Og2}) in the compact case. As an immediate consequence of Theorem \ref{secondresult} as well as Remark \ref{remark2.6}, for embedded case this result can be improved from the pointwise case to the mean case.
\begin{corollary}\label{fourthresult}
For an $n$-dimensional projective manifold $M$ in $\mathbb{P}^{n+r}$ with the induced metric, if $\underline{|\sigma|}^2<n$ (resp. $\underline{|\sigma|}^2=n$), then $M$ is isomorphic to $\mathbb{P}^n$ (resp. the quadric $Q^n$).
\end{corollary}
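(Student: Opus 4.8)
The plan is to deduce everything directly from the classification in Theorem \ref{secondresult}, since the present hypothesis is strictly stronger. Because $M$ has positive dimension $n\geq 1$, the assumption $\underline{|\sigma|}^2\leq n$ forces $\underline{|\sigma|}^2<2n$, so $(M,L)$ must be one of the four pairs listed in Theorem \ref{secondresult}. The whole argument then reduces to inspecting the value of $\underline{|\sigma|}^2$ recorded there in each of the four cases and retaining those that satisfy $\underline{|\sigma|}^2<n$, respectively $\underline{|\sigma|}^2=n$.

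First I would dispose of the two ``sporadic'' cases. For $(\mathbb{P}^n,\mathcal{O}_{\mathbb{P}^n}(1))$ one has $\sigma\equiv0$, hence $\underline{|\sigma|}^2=0<n$, which produces the desired $\mathbb{P}^n$. For $(Q^n,\mathcal{O}_{Q^n}(1))$ one has $\underline{|\sigma|}^2=n$, which produces the quadric in the equality case. The Veronese surface is ruled out at once: there $n=2$ but $\underline{|\sigma|}^2=3$, so $\underline{|\sigma|}^2=3>2=n$ fails both $<n$ and $=n$.

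The only point requiring a small computation is the family of rational normal scrolls $S(a_1,\ldots,a_n)$, for which $\underline{|\sigma|}^2=2n\bigl(1-\tfrac{1}{\sum_i a_i}\bigr)$. Since $\varepsilon$ is a direct sum of $n$ line bundles of positive degree, $\sum_{i=1}^n a_i\geq n$. Solving $2n\bigl(1-\tfrac{1}{\sum_i a_i}\bigr)<n$ gives $\sum_i a_i<2$, which combined with $\sum_i a_i\geq n\geq1$ forces $n=1$ and $a_1=1$; but $S(1)$ is just the line $\mathbb{P}^1$, already accounted for by the first case. Solving $2n\bigl(1-\tfrac{1}{\sum_i a_i}\bigr)=n$ gives $\sum_i a_i=2$, so either $n=1,\ a_1=2$, or $n=2,\ (a_1,a_2)=(1,1)$; the former is the conic $S(2)=Q^1$ and the latter is $S(1,1)\cong Q^2$ by Remark \ref{remark2.6}. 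In every admissible case $M$ is therefore isomorphic to $\mathbb{P}^n$ when $\underline{|\sigma|}^2<n$ and to $Q^n$ when $\underline{|\sigma|}^2=n$.

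The main (and essentially only) obstacle is bookkeeping: ensuring that the degenerate low-dimensional scrolls are correctly identified with $\mathbb{P}^n$ and $Q^n$, for which the inequality $\sum_i a_i\geq n$ and the exceptional isomorphism $S(1,1)\cong Q^2$ of Remark \ref{remark2.6} are precisely what is needed. No new analysis or geometry beyond Theorem \ref{secondresult} is required.
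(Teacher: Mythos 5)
Your proof is correct and follows exactly the route the paper intends: the paper gives no separate argument for this corollary, declaring it ``an immediate consequence of Theorem \ref{secondresult} as well as Remark \ref{remark2.6},'' and your case-by-case inspection of the four classified pairs (including ruling out the Veronese surface and identifying the low-degree scrolls $S(1)$, $S(2)=Q^1$, and $S(1,1)\cong Q^2$ via Remark \ref{remark2.6}) is precisely the bookkeeping that claim leaves implicit.
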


Another by-product of this classification result is the \emph{optimal} second gap value on $|\sigma|^2$, which is a solution to the complex analog of the problem proposed by Chern discussed in the Introduction.
\begin{corollary}\label{fifthresult}
For an $n$-dimensional projective manifold $M$ in $\mathbb{P}^{n+r}$ with the induced metric and $n\geq3$, if $\underline{|\sigma|}^2>n$, then $\underline{|\sigma|}^2\geq 2n-2$. In particular, if $|\sigma|^2\in(n,2n-2]$ everywhere on $M$, then $|\sigma|^2\equiv 2n-2$. They are optimal as the case $|\sigma|^2\equiv 2n-2$ can be achieved by the rational normal scroll $S(\underbrace{1,\ldots,1}_{n})$.
\end{corollary}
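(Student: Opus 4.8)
The plan is to read off this second gap directly from the classification in Theorem \ref{secondresult}, handling the inequality and the pointwise rigidity separately, and then to verify optimality by a homogeneity argument. The underlying principle is that, once Theorems \ref{firstresult} and \ref{secondresult} are in hand, the admissible values of $\underline{|\sigma|}^2$ below $2n$ form a very restricted set, and for $n\geq 3$ there is simply no such value in the open interval $(n,2n-2)$.

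First I would establish the gap inequality. Suppose $\underline{|\sigma|}^2>n$. If $\underline{|\sigma|}^2\geq 2n$ there is nothing to prove, since $2n>2n-2$, so I may assume $\underline{|\sigma|}^2<2n$ and invoke Theorem \ref{secondresult}. This forces $(M,L)$ to be one of the four listed pairs, and I would then simply enumerate the possible values of $\underline{|\sigma|}^2$ under the standing hypothesis $n\geq 3$: cases (1) and (2) give $0$ and $n$ respectively, both excluded by $\underline{|\sigma|}^2>n$; case (3), the Veronese surface, has $n=2$ and hence does not occur; and in case (4) one has $\underline{|\sigma|}^2=2n\big(1-\frac{1}{\sum_{i=1}^n a_i}\big)$. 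The crucial observation is that a rational normal scroll $S(a_1,\ldots,a_n)$ is built from exactly $n$ line bundles of degrees $a_i\geq 1$, so $d(M)=\sum_{i=1}^n a_i\geq n$, whence $\underline{|\sigma|}^2=2n\big(1-\frac{1}{d(M)}\big)\geq 2n\big(1-\frac{1}{n}\big)=2n-2$. Thus no admissible value lies in $(n,2n-2)$, and the inequality follows.

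Next I would prove the pointwise rigidity. Assuming $n<|\sigma|^2\leq 2n-2$ everywhere and integrating over the compact manifold $M$, I obtain $n<\underline{|\sigma|}^2\leq 2n-2$, the strict lower bound because $|\sigma|^2-n$ is continuous and strictly positive. The gap inequality just proved gives $\underline{|\sigma|}^2\geq 2n-2$, hence $\underline{|\sigma|}^2=2n-2$. Since $2n-2-|\sigma|^2$ is then a nonnegative continuous function with vanishing integral, it vanishes identically, i.e. $|\sigma|^2\equiv 2n-2$. For optimality I would exhibit $S(\underbrace{1,\ldots,1}_{n})$: by case (4) of Theorem \ref{secondresult} with $d(M)=\sum_i a_i=n$ its mean value is exactly $2n\big(1-\frac{1}{n}\big)=2n-2$, and under its Kodaira map it is the Segre variety $\mathbb{P}^1\times\mathbb{P}^{n-1}\hooklongrightarrow\mathbb{P}^{2n-1}$, which is homogeneous under a transitive group of Fubini-Study isometries (the tensor action of $SU(2)\times SU(n)$ on $\mathbb{C}^2\otimes\mathbb{C}^n$); consequently $|\sigma|^2$ is constant and therefore equals its mean value $2n-2$ at every point.

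I expect the genuinely delicate points to be bookkeeping rather than analytic. One must confirm that $d(M)\geq n$ for \emph{every} scroll, so that no scroll value slips into the gap, and that the hypothesis $n\geq 3$ is precisely what both eliminates the Veronese surface (which would otherwise contribute the value $3$ when $n=2$) and separates $S(1,\ldots,1)$ from the quadric (cf. Remark \ref{remark2.6}, where the coincidence $Q^2\cong S(1,1)$ occurs only for $n=2$). The analytic heart of the argument, by contrast, is the elementary fact that a pointwise bound which is attained in the mean must be attained everywhere.
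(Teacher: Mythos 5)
Your proposal is correct, and its first two steps (the gap inequality via Theorem \ref{secondresult} plus the degree bound $\sum_i a_i\geq n$, and the pointwise rigidity via integration of the nonnegative function $2n-2-|\sigma|^2$) coincide with the paper's argument, though the paper leaves the integration step implicit. Where you genuinely diverge is the final optimality step, i.e.\ proving $|\sigma|^2\equiv 2n-2$ on $S(1,\ldots,1)$: the paper identifies the Kodaira embedding with the Segre embedding $\mathbb{P}^{n-1}\times\mathbb{P}^1\hookrightarrow\mathbb{P}^{2n-1}$, invokes Cecil--Ryan to see that the induced metric is the product of Fubini--Study metrics, computes the scalar curvature $S_{(g_1,g_2)}=n^2-n+2$, and concludes via the Gauss-equation identity $|\sigma|^2=n(n+1)-S_g$; you instead observe that the Segre variety is a single orbit of the group $SU(2)\times SU(n)$ acting by Fubini--Study isometries of $\mathbb{P}^{2n-1}$, so $|\sigma|^2$ is constant and must equal its mean value $2n\big(1-\tfrac1n\big)=2n-2$ already supplied by Theorem \ref{secondresult}. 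Both are valid; your homogeneity argument avoids the Cecil--Ryan product-metric fact and the curvature computation entirely (at the cost of checking that the tensor action really lands in the isometry group and is transitive on the Segre variety), while the paper's computation is self-contained in Riemannian terms and does not rely on any symmetry. One small caveat on your closing commentary: the real reason the statement is restricted to $n\geq3$ is that $2n-2=n$ when $n=2$, so the claimed second gap value would collapse to the first (and the actual second value for $n=2$ is $8/3$, attained by $S(1,2)$); within the proof itself, however, your use of $n\geq3$ only to discard the Veronese surface is exactly what is needed.
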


\section{Preliminaries}\label{section3}
We briefly recall in this section some related notation and results in algebraic geometry, mainly for our later purpose. For more details on these materials we refer the reader to \cite{Fu} and \cite[\S 3]{BS}.

Throughout this section we work over $\mathbb{C}$, the field of complex numbers.

Let $M$ be an $n$-dimensional smooth projective variety, $L$ a line bundle on it, and $K_M$ its canonical line bundle. By applying the Hirzebruch-Riemann-Roch formula to the holomorphic Euler characteristic $\chi(M,tL)$ ($t\in\mathbb{Z}$) and considering some special coefficients in front of $t^i$, it turns out that (cf. \cite[p. 25-26]{Fu}) the integer $$\big(K_M+(n-1)L\big)\cdot L^{n-1}$$
is \emph{even}.
With this fact in mind, the following two closely related notions were introduced by Fujita (cf. \cite[p. 26]{Fu}), who, in a series of papers, successfully described the structure of the pair $(M,L)$ for ample $L$ when they are small enough. These results have been summarized in his book \cite{Fu}.
\begin{definition}
Let $M$ be an $n$-dimensional smooth projective variety and $L$ a line bundle on it. The \emph{sectional genus} of $L$, $g(L)$, is defined by
\be\label{sectional genus}g(L):=\frac{\big(K_M+(n-1)L\big)\cdot L^{n-1}}{2}+1.\ee
The \emph{$\Delta$-genus} of $L$, $\Delta(L)$, is defined by
\be\label{delta genus}\Delta(L):=n+L^n-\text{dim}_{\mathbb{C}}H^0(M,L).\ee
Here as usual denote by $H^0(M,L)$ the complex vector space consisting of holomorphic sections of $L$.
\end{definition}

A fundamental result related to $g(L)$ and $\Delta(L)$ is the following result due to Fujita (\cite{Fu0}, \cite[p. 107, p. 35]{Fu}).

\begin{theorem}\label{Fujita1}
The sectional genus $g(L)\geq0$ if $L$ is ample, and the equality holds if and only if the $\Delta$-genus $\Delta(L)=0$. In the latter case $L$ is necessarily very ample.
\end{theorem}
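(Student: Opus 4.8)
The plan is to reduce the statement to the case of curves by the classical method of general hyperplane sections and then to feed in the numerical geometry of varieties of minimal degree. Since $L$ is ample one may (after passing to a high multiple for Bertini-type purposes, while tracking the effect on the invariants) assume $L$ is spanned, so that a general member of $|L|$ is smooth and irreducible. I would build a \emph{ladder} $M=M_n\supset M_{n-1}\supset\cdots\supset M_1=:C$ in which each $M_{j-1}$ is a general smooth irreducible member of the restricted system $|L|_{M_j}|$. Adjunction $K_{M_{j-1}}=(K_{M_j}+L)|_{M_{j-1}}$ together with $M_{j-1}\sim L$ then shows that the sectional genus is constant along the ladder:
\begin{align*}
g\big(L|_{M_{j-1}}\big)
&=1+\tfrac12\big(K_{M_{j-1}}+(j-2)L\big)\cdot L^{j-2}\\
&=1+\tfrac12\big(K_{M_j}+(j-1)L\big)\cdot L^{j-1}
=g\big(L|_{M_j}\big).
\end{align*}
Iterating down to $C$ yields $g(L)=g(C)$, the geometric genus of a smooth curve, whence $g(L)\ge 0$.

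Next I would relate $g(L)$ to $\Delta(L)$ through the same curve $C$. The exact sequences $0\to\mathcal{O}_{M_j}\to L\to L|_{M_{j-1}}\to 0$ give $\Delta(L|_{M_{j-1}})\le\Delta(L|_{M_j})$, with equality exactly when the restriction $H^0(M_j,L)\to H^0(M_{j-1},L|_{M_{j-1}})$ is surjective; hence $0\le\Delta(C)\le\Delta(L)$. On the curve, Riemann-Roch gives $\Delta(L|_C)=g(C)-h^1(L|_C)=g(L)-h^1(L|_C)$. Thus if $g(L)=0$ then $C\cong\mathbb{P}^1$, so $h^1(L|_C)=0$ and $\Delta(C)=0$; to promote this to $\Delta(L)=0$ one needs the restriction maps along the ladder to be surjective, i.e. the vanishing of the relevant $H^1$'s.

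The geometric input that closes both directions is Fujita's theorem that $\Delta(L)\ge 0$ for ample $L$, together with his classification of the extremal case: $\Delta(L)=0$ holds precisely when $(M,L)$ is a variety of \emph{minimal degree}, equivalently $L$ is very ample and $\deg M=\operatorname{codim}M+1$ in the embedding by $|L|$. For smooth such pairs — namely $(\mathbb{P}^n,\mathcal{O}(1))$, the smooth quadric, the rational normal scrolls, and the Veronese surface $(\mathbb{P}^2,\mathcal{O}(2))$ — the general curve section $C$ is a rational normal curve, so $g(L)=g(C)=0$; this gives $\Delta(L)=0\Rightarrow g(L)=0$ and very ampleness. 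Conversely $g(L)=0$ forces $C$ to be a rational curve of minimal degree, and propagating minimality up the ladder lands $(M,L)$ in the same list, so $\Delta(L)=0$.

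The hard part is precisely this last, non-formal step: the classification of $\Delta(L)=0$ polarized manifolds and the implication ``rational (minimal-degree) curve section $\Rightarrow$ minimal-degree variety,'' which is the heart of Fujita's $\Delta$-genus theory. That no purely numerical comparison of $g$ and $\Delta$ can replace it is already visible for a principally polarized abelian surface, where $\Delta(L)=3>g(L)=2$; the equivalence of the \emph{vanishing} conditions therefore genuinely depends on the structure theory and on the vanishing of the irregularity-type $H^1$ forced in the extremal case, rather than on cohomological bookkeeping alone. A secondary technical point is running the ladder for $L$ merely ample: one must keep the successive sections smooth and irreducible and check that the reduction to a spanned or very ample situation does not corrupt $g(L)$ and $\Delta(L)$, which is exactly what Fujita's refined reduction machinery provides.
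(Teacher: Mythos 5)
The paper never proves this statement: it is quoted as a known theorem of Fujita, with citations to \cite{Fu0} and \cite{Fu}, just as the classification of $\Delta$-genus zero pairs (Theorem \ref{Fujita2}) is quoted. So your sketch has to stand as an independent proof, and it does not, for two concrete reasons. The first is the very first step: for $L$ merely ample the ladder need not exist, since an ample line bundle can fail to be spanned and can even have $h^0(M,L)=0$ (already a general line bundle of degree $1$ on a curve of genus $2$ is ample with no sections). Your proposed remedy --- replace $L$ by $kL$ for $k\gg0$ ``while tracking the effect on the invariants'' --- cannot be made to work: $g(kL)$ and $\Delta(kL)$ do not determine $g(L)$ and $\Delta(L)$, and for $n\geq 2$ the inequality $g(kL)\geq0$ holds trivially for large $k$ because the top term $\tfrac{n-1}{2}k^nL^n$ dominates, so it carries no information back to $L$. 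Handling ample non-spanned $L$ is precisely where the hard work in Fujita's theory lies; it is not the ``secondary technical point'' your last sentence makes it out to be.

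The second gap is at the heart of the equivalence. For the direction $g(L)=0\Rightarrow\Delta(L)=0$ you need the restriction maps $H^0(M_j,L)\to H^0(M_{j-1},L|_{M_{j-1}})$ to be surjective, i.e.\ control of $H^1(M_j,\mathcal{O}_{M_j})$ along the ladder, and you never establish this vanishing; for the converse direction and for very ampleness you invoke ``Fujita's theorem that $\Delta(L)\geq0$'' together with ``his classification of the extremal case,'' which you yourself identify as the heart of the matter. In other words, at every point where the statement is nontrivial, your argument cites the body of results of which the statement to be proved is a part. The pieces you do prove --- constancy of the sectional genus along a ladder via adjunction, the Riemann--Roch identity $\Delta(L|_C)=g(L)-h^1(L|_C)$ on a curve section, and the instructive principally polarized abelian surface example showing that no purely numerical comparison of $g$ and $\Delta$ can suffice --- are correct and do reproduce the standard skeleton of Fujita's approach, but they reduce the theorem to its hardest ingredients rather than proving it. Since the paper treats the theorem as a black box from \cite{Fu0} and \cite{Fu}, the honest framing of your text is: a partially flawed reduction plus a citation, not a proof.
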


The following classification result of $\Delta$-genera zero for ample $L$ is due to Fujita (\cite[p. 41]{Fu}).
\begin{theorem}\label{Fujita2}
Let $M$ be an $n$-dimensional smooth projective variety and $L$ an ample line bundle on it. Suppose that $\Delta(L)=0$. Then the pair $(M,L)$ is isomorphic to
\begin{enumerate}
\item
$(\mathbb{P}^n,\mathcal{O}_{\mathbb{P}^n}(1))$,

\item
$(Q^n,\mathcal{O}_{Q^n}(1))$,

\item
the Veronese surface $\big(\mathbb{P}^2,\mathcal{O}_{\mathbb{P}^2}(2)\big)$,

or

\item
the rational normal scroll $\big(S(a_1,\ldots,a_n),\mathcal{O}(1)\big)$, in the notation of (\ref{notatinforscorll}), with all integers $a_i>0$.
\end{enumerate}
\end{theorem}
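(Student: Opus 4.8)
The plan is to reduce the statement to the classical classification of varieties of \emph{minimal degree} and then to run an induction on $n=\dim M$ via general hyperplane sections. First I would pass from ``ample'' to ``very ample'': since $\Delta(L)=0$ with $L$ ample, Theorem~\ref{Fujita1} already guarantees that $L$ is very ample, so the complete linear system $|L|$ embeds $M$ as a smooth nondegenerate subvariety of $\mathbb{P}^N$ with $N=\dim_{\mathbb{C}}H^0(M,L)-1$. Writing $d=L^n=\deg M$, the hypothesis $\Delta(L)=n+L^n-\dim_{\mathbb{C}}H^0(M,L)=0$ reads $\dim_{\mathbb{C}}H^0(M,L)=n+d$, whence $N=n+d-1$ and the codimension of $M$ equals $d-1=\deg M-1$. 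Thus $M$ attains equality in the classical bound $\deg X\ge \operatorname{codim}X+1$ valid for every nondegenerate irreducible variety $X$; in other words $M$ is a \emph{smooth variety of minimal degree}, and it suffices to prove the del Pezzo--Bertini classification of such varieties under the extra hypothesis that they are smooth.

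For the induction I would take a general hyperplane $H$ and set $M'=M\cap H$, $L'=L|_{M'}$. The base case $n=1$ is a nondegenerate curve of degree $d$ in $\mathbb{P}^d$, which is necessarily the rational normal curve $(\mathbb{P}^1,\mathcal{O}_{\mathbb{P}^1}(d))$, i.e.\ the one-dimensional scroll $S(d)$ in the notation of (\ref{notatinforscorll}). The key lemma for the inductive step is that the $\Delta$-genus is preserved, $\Delta(L')=\Delta(L)=0$, and that $M'$ is again smooth, irreducible and nondegenerate in $H\cong\mathbb{P}^{N-1}$. Here $\dim M'=n-1$ and $(L')^{n-1}=L^{n-1}\cdot L=d$ are immediate, so the real content is the identity $\dim_{\mathbb{C}}H^0(M',L')=\dim_{\mathbb{C}}H^0(M,L)-1$; this follows from the surjectivity of the restriction map $H^0(M,L)\to H^0(M',L')$ read off the exact sequence $0\to\mathcal{O}_M\to L\to L'\to0$, which in turn rests on the fact that minimal-degree varieties are arithmetically Cohen--Macaulay (projectively normal with vanishing intermediate cohomology, so that $H^1(M,\mathcal{O}_M)=0$ for $n\ge2$). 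By the inductive hypothesis $M'$ is then one of the four listed models.

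The reconstruction of $M$ from $M'$ is the main obstacle, and it is also where the four cases separate. If $d=1$ then $M=\mathbb{P}^n$ and if $d=2$ then $M$ is the smooth quadric $Q^n$; when $d\ge3$ one climbs back up the ladder $M_1\subset M_2\subset\cdots\subset M_n=M$ of general members and must show that the scroll structure of the hyperplane section (or, in the anomalous surface case, the Veronese structure) lifts to the total space. Smoothness of $M$ is decisive at this point: it excludes the cone constructions that otherwise appear among varieties of minimal degree and forces every scroll parameter to satisfy $a_i>0$, producing exactly the smooth models $S(a_1,\ldots,a_n)$ of (\ref{notatinforscorll}) together with the single two-dimensional exception $\big(\mathbb{P}^2,\mathcal{O}_{\mathbb{P}^2}(2)\big)$. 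The only coincidence between the resulting families, namely $Q^2\cong S(1,1)$ recorded in Remark~\ref{remark2.6}, should be kept in mind but causes no trouble. Assembling these steps reproduces Fujita's list.
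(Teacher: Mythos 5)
First, context: the paper itself does not prove this theorem; it is imported verbatim from Fujita's book (\cite[p. 41]{Fu}), and the remark following it records that the list coincides with the smooth projective varieties of minimal degree of \cite[Thm. 1]{EH1}. Your opening reduction is correct and mirrors exactly that remark: by Theorem \ref{Fujita1}, $\Delta(L)=0$ makes $L$ very ample, and $\dim_{\mathbb{C}}H^0(M,L)=n+d$ means the Kodaira embedding realizes $M$ as a smooth nondegenerate subvariety of $\mathbb{P}^{n+d-1}$ with $\deg M=\mathrm{codim}\,M+1$, i.e.\ of minimal degree. Had you stopped there and cited the del Pezzo--Bertini classification (\cite[Thm. 1]{EH1}), noting that smoothness excludes the cones, you would have a complete argument, a legitimate alternative to the paper's citation of Fujita.

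Instead you undertake to prove the del Pezzo--Bertini classification itself, and there the proposal has a genuine gap at precisely the point you call ``the main obstacle.'' The inductive reconstruction of $M$ from its general hyperplane section $M'$ is the entire mathematical content of the theorem, and your treatment of it is the assertion that the scroll (or Veronese) structure ``lifts'' and that smoothness ``excludes the cone constructions.'' No argument is given that the ruling of $M'$ --- a one-parameter family of $(n-2)$-planes --- extends to a one-parameter family of $(n-1)$-planes on $M$; producing that extension is what Fujita's ladder apparatus, or Eisenbud--Harris's determinantal description by quadrics, is designed to do, and ``smoothness excludes cones'' constructs nothing. Note also that the passage $M'\mapsto M$ genuinely branches: a rational normal quartic in $\mathbb{P}^4$ is simultaneously a hyperplane section of the Veronese surface, of $S(1,3)$, and of $S(2,2)$, so the section alone does not determine $M$ and the Veronese cannot simply be set aside as ``anomalous'' --- the lifting argument must be capable of producing it. A secondary flaw: your surjectivity of $H^0(M,L)\to H^0(M',L')$ invokes the arithmetic Cohen--Macaulay property of minimal-degree varieties, which in the standard references is \emph{deduced from} the classification you are proving, so as written this is circular. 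That step, at least, is repairable without ACM: the kernel of restriction is the line spanned by the equation of $H$, so $\dim_{\mathbb{C}}H^0(M',L')\geq n+d-1$, while the degree bound $\deg\geq\mathrm{codim}+1$ applied to the image of $M'$ under its complete linear system gives $\dim_{\mathbb{C}}H^0(M',L')\leq (n-1)+d$; equality forces both $\Delta(L')=0$ and the surjectivity. The reconstruction step, however, admits no such cheap fix, and without it the induction does not close.
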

\begin{remark}
It turns out that the manifolds listed in Theorem \ref{Fujita2} coincide with smooth projective varieties of \emph{minimal degrees} (cf. \cite[Thm 1]{EH1}), a fact that will be needed in our proof of Theorem \ref{secondresult}.
\end{remark}

\section{Proof of Theorem \ref{firstresult}}\label{section4}
Let $(M,g,J)\overset{i}{\hooklongrightarrow}(\mathbb{P}^{n+r},g_{0},J_{0})$
be an $n$-dimensional projective manifold in $\mathbb{P}^{n+r}$. Here $\mathbb{P}^{n+r}$ is endowed with the standard complex structure $J_0$ and the Fubini-Study metric $g_0$ of constant holomorphic sectional curvature $1$, and $M$ is endowed with the induced metric $g=i^{\ast}(g_0)$. The associated (normalized) K\"{a}hler form $\omega_0$ of $g_0$ under the homogeneous coordinate $[z^0:z^1:\cdots:z^n]$ is defined by
$$\omega_0:=
\frac{\sqrt{-1}}{2\pi}\partial\bar{\partial}\log(
\sum_{i=0}^n|z^i|^2).$$
With this normalized coefficient we have (cf. \cite[p. 165]{Zhe})
\begin{eqnarray}
\left\{ \begin{array}{ll}
\int_{\mathbb{P}^{n+r}}\omega_0^{n+r}=1\\
~\\
c_1\big(\mathcal{O}_{\mathbb{P}^{n+r}}(1)\big)
=[\omega_0]\in H^{1,1}(\mathbb{P}^{n+r};\mathbb{Z}).\\
\end{array} \right.\nonumber
\end{eqnarray}

Therefore $\omega:=i^{\ast}(\omega_0)$ is the associated  normalized K\"{a}hler form of $g$ such that
\begin{eqnarray}\label{1.5}
\left\{ \begin{array}{ll}
c_1(L)=[\omega]\in H^{1,1}(M;\mathbb{Z})\\
~\\
\int_M\omega^n=L^n=d(M)\\
\end{array} \right.
\end{eqnarray}
as $L=i^{\ast}\big(\mathcal{O}_{\mathbb{P}^{n+r}}(1)\big)$.

Let
$$\text{Ric}(g):=-\frac{\sqrt{-1}}{2\pi}\partial\bar{\partial}
\log\det{(g)}$$
be the (normalized) Ricci form of $g$, which represents the first Chern class of $M$ and thus the anti-canonical line bundle: \be\label{2.5}[\text{Ric}(g)]=c_1(K_M^{-1}).\ee

Denote by $S_g$ the scalar curvature function of $g$ on $M$ and it is a well-known fact that (cf. \cite[p. 60]{Sz})
\be\label{3}S_g\cdot\omega^n=n\cdot\text{Ric}(g)\wedge\omega^{n-1}.\ee
Another basic fact is that $S_g$ is related to the squared length of the second fundamental form $\sigma$ by
\be\label{4}|\sigma|^2=n(n+1)-S_g,\ee
which can be proved via the Gauss equation (cf. \cite[p. 77]{Og2}).

With these facts understood, we can proceed to prove Theorem \ref{firstresult}.
\begin{proof}
\be\begin{split}
\underline{|\sigma|}^2-2n
&=\frac{\int_M|\sigma|^2\omega^n}{\int_M\omega^n}-2n\\
&=n(n-1)-\frac{\int_MS_g\cdot\omega^n}{\int_M\omega^n}\qquad\big(\text{by
(\ref{4})}\big)\\
&=n\Big[\frac{\int_M\big((n-1)\omega-\text{Ric}(g)\big)
\wedge\omega^{n-1}}{\int_M\omega^n}\Big]\qquad\big(\text{by
(\ref{3})}\big)\\
&=n\frac{\big((n-1)L+K_M\big)\cdot L^{n-1}}{d(M)}\qquad\big(\text{by
(\ref{1.5}) and (\ref{2.5})}\big)\\
&=2n\Big[\frac{g(L)-1}{d(M)}\Big].\qquad\big(\text{by
(\ref{sectional genus})}\big)
\end{split}
\nonumber\ee
This yields the desired formula (\ref{fundamentalformformula}) and thus completes the proof of Theorem \ref{firstresult}.
\end{proof}

\section{Proof of Theorem \ref{secondresult}}\label{section5}
The proof shall be divided into three lemmas.

\begin{lemma}The pair $(M,L)$ in question must be isomorphic to one of the four cases described in Theorem \ref{secondresult}.
\end{lemma}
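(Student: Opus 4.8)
The plan is to translate the analytic hypothesis $\underline{|\sigma|}^2<2n$ into a purely algebro-geometric constraint via the formula (\ref{fundamentalformformula}) of Theorem \ref{firstresult}, and then to invoke Fujita's classification theorems recalled in Section \ref{section3}. First I would rewrite (\ref{fundamentalformformula}) in the form
$$\underline{|\sigma|}^2-2n=\frac{2n\,\big(g(L)-1\big)}{d(M)},$$
and observe that, since $M$ is embedded in $\mathbb{P}^{n+r}$, the degree $d(M)=L^n$ is a positive integer. Consequently the hypothesis $\underline{|\sigma|}^2<2n$ is equivalent to $g(L)-1<0$, that is, $g(L)<1$.

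The next step is to pin down $g(L)$ exactly. Because $i\colon M\hooklongrightarrow\mathbb{P}^{n+r}$ is an embedding, the line bundle $L=i^{\ast}\big(\mathcal{O}_{\mathbb{P}^{n+r}}(1)\big)$ is the restriction of a very ample bundle and is therefore itself very ample, in particular ample. Theorem \ref{Fujita1} then yields $g(L)\geq0$. Combined with the strict bound $g(L)<1$ and the integrality $g(L)\in\mathbb{Z}$ recorded in Theorem \ref{firstresult}, this forces $g(L)=0$. Invoking the second assertion of Theorem \ref{Fujita1}, the vanishing $g(L)=0$ is equivalent to $\Delta(L)=0$.

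Finally, with $\Delta(L)=0$ and $L$ ample in hand, Theorem \ref{Fujita2} classifies the pair $(M,L)$ as precisely one of the four types appearing in the statement of Theorem \ref{secondresult}: the linear $\mathbb{P}^n$, the quadric $Q^n$, the Veronese surface $\big(\mathbb{P}^2,\mathcal{O}_{\mathbb{P}^2}(2)\big)$, or a rational normal scroll $\big(S(a_1,\ldots,a_n),\mathcal{O}(1)\big)$ with all $a_i>0$. This completes the identification claimed in the lemma. I do not expect any genuine obstacle in this lemma itself; the entire force of the argument is carried by Fujita's theorems, and the single point demanding care is the integrality of $g(L)$, which is exactly what upgrades the open condition $g(L)<1$ to the sharp equality $g(L)=0$. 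The remaining computational work—confirming the specific values of $\underline{|\sigma|}^2$, $d(M)$, and the codimension lower bounds listed for each of the four cases, the scroll case being the most involved—is naturally postponed to the subsequent lemmas of this section.
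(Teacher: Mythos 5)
Your proof is correct and follows essentially the same route as the paper: use Theorem \ref{firstresult} to convert $\underline{|\sigma|}^2<2n$ into $g(L)<1$, combine integrality and Fujita's bound $g(L)\geq 0$ for ample $L$ (Theorem \ref{Fujita1}) to get $g(L)=0$, hence $\Delta(L)=0$, and conclude via Fujita's classification (Theorem \ref{Fujita2}). Your explicit attention to the integrality of $g(L)$ is a point the paper leaves implicit in the step ``$\underline{|\sigma|}^2<2n$ implies $g(L)\leq 0$,'' but the argument is the same.
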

\begin{proof}
The condition is that $\underline{|\sigma|}^2<2n$. Combining this with (\ref{fundamentalformformula}) implies that $g(L)\leq 0$. However, in our situation $L$ is ample (indeed very ample) and so Theorem \ref{Fujita1} tells us that $g(L)\geq0$. Therefore the only possibility is that $g(L)=0$, which, again by Theorem \ref{Fujita1}, is equivalent to $\Delta(L)=0$. This enables us to apply Fujita's classification result, Theorem \ref{Fujita2}, to conclude that the pair $(M,L)$ in question must be isomorphic to one of the four cases claimed in Theorem \ref{secondresult}.
\end{proof}

\begin{lemma}The claims in the four cases on the second fundamental form $\sigma$ and the degree $d(M)$ are true.
\end{lemma}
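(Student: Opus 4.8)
The plan is to exploit formula (\ref{fundamentalformformula}) together with the fact, established in the preceding lemma, that $g(L)=0$ in each of the four cases. Substituting $g(L)=0$ into (\ref{fundamentalformformula}) gives the uniform simplification
$$\underline{|\sigma|}^2=2n\Big(1-\frac{1}{d(M)}\Big),$$
so the entire problem reduces to computing the degree $d(M)=L^n$ in each case and, where a \emph{pointwise} statement is asserted, upgrading the averaged identity to a pointwise one.

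First I would carry out the four degree computations by standard intersection theory. For $(\mathbb{P}^n,\mathcal{O}_{\mathbb{P}^n}(1))$ one has $L^n=1$, whence $\underline{|\sigma|}^2=0$; since $|\sigma|^2\geq0$ pointwise, this forces $|\sigma|^2\equiv0$, i.e. $\sigma\equiv0$. For $(Q^n,\mathcal{O}_{Q^n}(1))$ the degree of the quadric hypersurface is $L^n=2$, giving $\underline{|\sigma|}^2=n$. For the Veronese surface $(\mathbb{P}^2,\mathcal{O}_{\mathbb{P}^2}(2))$ we have $n=2$ and $L^2=\big(2\,\mathcal{O}_{\mathbb{P}^2}(1)\big)^2=4$, giving $\underline{|\sigma|}^2=3$. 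For the scroll $\big(S(a_1,\dots,a_n),\mathcal{O}(1)\big)=(\mathbb{P}(\varepsilon),\xi)$ with $\varepsilon=\bigoplus_i\mathcal{O}_{\mathbb{P}^1}(a_i)$ of rank $n$, the base is $\mathbb{P}^1$ and the fiber is $\mathbb{P}^{n-1}$, so $\pi_*\xi^{n-1}=1$; the Grothendieck relation on the projective bundle, in which all higher Chern classes of $\varepsilon$ vanish on a curve base, reduces to $\xi^n=\pi^{\ast}c_1(\varepsilon)\cdot\xi^{n-1}$, yielding $\xi^n=\deg\varepsilon=\sum_i a_i$. This is the asserted $d(M)$ and gives $\underline{|\sigma|}^2=2n\big(1-1/\sum_i a_i\big)$.

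The remaining point is the pointwise claim $|\sigma|^2\equiv n$ for the quadric in case (2), where averaging alone is insufficient. The extra input is the homogeneity of the standard embedding: $Q^n$ is a Hermitian symmetric space on which $SO(n+2)$ acts transitively by isometries of the ambient Fubini--Study metric, and the embedding is equivariant, so $|\sigma|^2$ is constant on $M$ and therefore coincides with its mean value $n$. (The same homogeneity would in fact promote case (3) to a pointwise statement, but only the mean is needed there.)

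I expect the degree computations themselves to be entirely routine, the scroll case being the only one requiring the projective-bundle relation. The genuine conceptual step—and hence the main obstacle—is recognizing that the pointwise rigidity for the quadric does not follow formally from the degree computation but rests on the additional structure, namely the transitive isometric group action realizing the embedding.
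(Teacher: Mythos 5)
Your proposal is correct and follows the same skeleton as the paper's proof: substitute $g(L)=0$ into (\ref{fundamentalformformula}) to get $\underline{|\sigma|}^2=2n\big(1-\tfrac{1}{d(M)}\big)$, then compute $d(M)=L^n$ case by case, treating the pointwise assertion for the quadric separately. The only divergences are in how two sub-facts are justified: for the scroll degree the paper simply cites Eisenbud--Harris, whereas you derive $\xi^n=\sum_i a_i$ from the Grothendieck relation (correctly); and for $|\sigma|^2\equiv n$ on $Q^n$ the paper quotes the standard fact $S_g\equiv n^2$ and applies the Gauss-equation identity (\ref{4}), whereas you invoke the transitive isometric $SO(n+2)$-action on the standard quadric --- both valid, and your remark that the averaged identity alone cannot yield the pointwise statement is precisely the gap the paper fills with its scalar-curvature citation.
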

\begin{proof}
First note that the formula (\ref{fundamentalformformula}) reduces to
\be\label{newfundamentalformformula}
\underline{|\sigma|}^2=2n
\big[1-\frac{1}{d(M)}\big]\ee
as $g(L)=0$ discussed in the above lemma.

Case $(1)$ is clear.

For Case $(2)$, $d(M)=\big[\mathcal{O}_{Q^n}(1)\big]^n=2$ and so (\ref{newfundamentalformformula}) implies that $\underline{|\sigma|}^2\equiv n$. However, the standard fact is that the scalar curvature in this case is $S_g\equiv n^2$ (cf. \cite[p. 82]{Og2}) and so $|\sigma|^2\equiv n$ via (\ref{4}).

Case $(3)$ is clear.

For Case $(4)$, the degree satisfies (\cite[p. 7]{EH1}) $$d\big(S(a_1,\ldots,a_n)\big)=\big[\mathcal{O}(1)\big]^n=\sum_{i=1}^na_i$$
and so (\ref{norm of scroll}) follows from (\ref{newfundamentalformformula}).
\end{proof}

\begin{lemma}The lower bounds of the codimension $r$ are sharp and exactly attained by the Kodaira maps of these very ample line bundles.
\end{lemma}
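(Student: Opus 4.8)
The plan is to realize each pair $(M,L)$ explicitly through the Kodaira map of the complete linear system $|L|$ and to read off its codimension from the $\Delta$-genus formula. Since the first lemma of this section already forced $g(L)=0$, Theorem \ref{Fujita1} yields $\Delta(L)=0$ and the very ampleness of $L$; consequently the Kodaira map
$$\Phi_{|L|}\colon M\hooklongrightarrow \mathbb{P}\big(H^0(M,L)^{\ast}\big)\cong\mathbb{P}^{\,h^0(M,L)-1}$$
is a closed embedding with $\Phi_{|L|}^{\ast}\mathcal{O}(1)=L$. Reading $h^0(M,L)$ off the definition (\ref{delta genus}) together with $\Delta(L)=0$ gives $h^0(M,L)=n+L^n=n+d(M)$, so the image lies in $\mathbb{P}^{\,n+d(M)-1}$ and has codimension exactly $d(M)-1$.

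Next I would substitute the degrees determined in the second lemma. In Cases (1)--(4) these are $d(M)=1,\,2,\,4$ and $\sum_{i=1}^{n}a_i$, producing codimensions $0,\,1,\,3$ and $-1+\sum_{i=1}^{n}a_i$, in exact agreement with the values asserted in Theorem \ref{secondresult}. Because $L$ is very ample, $\Phi_{|L|}$ is an isomorphism onto its image, so each bound is genuinely attained; concretely this recovers the hyperplane $\mathbb{P}^n$, the quadric $Q^n\subseteq\mathbb{P}^{n+1}$, the Veronese surface in $\mathbb{P}^5$, and the standard rational normal scroll of degree $\sum a_i$ in $\mathbb{P}^{\,n-1+\sum a_i}$ of (\ref{notatinforscorll}).

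To identify $d(M)-1$ as the codimension attached canonically to the polarization, I would argue via linear normality. By the remark following Theorem \ref{Fujita2}, the four pairs are precisely the varieties of minimal degree, and such varieties are linearly (in fact projectively) normal in their canonical embedding, so the restriction map $H^0(\mathbb{P}^{n+r},\mathcal{O}(1))\to H^0(M,L)$ is an isomorphism when $i=\Phi_{|L|}$. This equality forces $n+r+1=h^0(M,L)=n+d(M)$, whence $r=d(M)-1$, and it exhibits $\Phi_{|L|}$ as the linearly normal embedding realizing the stated value exactly.

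The delicate point, and the step I would treat most carefully, is the direction of the inequality. For a nondegenerate image the minimal-degree estimate $d(M)\geq\mathrm{codim}+1$ only supplies the upper bound $r\leq d(M)-1$; the matching bound is genuinely a statement about the \emph{complete} system $|L|$, i.e.\ about linear normality, and not about an arbitrary base-point-free subsystem. I would therefore make explicit that the codimension recorded in Theorem \ref{secondresult} is that of the canonical linearly normal embedding $\Phi_{|L|}$—the Kodaira map of the very ample $L$—which attains $d(M)-1$ on the nose, and that any reduction below this value would contradict the surjectivity of the restriction map forced by linear normality.
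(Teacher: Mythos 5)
Your first three paragraphs are in substance the paper's own argument: from $g(L)=0$ and Theorem \ref{Fujita1} one gets $\Delta(L)=0$ and the very ampleness of $L$, so (\ref{delta genus}) gives $\dim_{\mathbb{C}}H^0(M,L)=n+L^n$, and the Kodaira map therefore embeds $M$ nondegenerately into $\mathbb{P}^{n+d(M)-1}$, i.e.\ with codimension exactly $d(M)-1$; substituting $d(M)=1,2,4,\sum_i a_i$ recovers the values listed in Theorem \ref{secondresult}. This establishes attainment, and it is exactly how the paper concludes.

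The gap is in your final paragraph, precisely at the point you yourself flagged as delicate. The claim that ``any reduction below this value would contradict the surjectivity of the restriction map forced by linear normality'' is circular: linear normality is a property of a \emph{particular} embedding, and it is automatic only for the complete-system embedding $\Phi_{|L|}$. If some embedding $i\colon M\hookrightarrow\mathbb{P}^{n+r}$ with $i^{\ast}\mathcal{O}(1)=L$ had $r<d(M)-1$, the restriction map $H^0(\mathbb{P}^{n+r},\mathcal{O}(1))\to H^0(M,L)$ would be injective (by nondegeneracy) but not surjective; such an embedding would simply fail to be linearly normal, and no contradiction arises, since nothing forces linear normality of $i$. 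What the lower bound actually requires is that \emph{every} embedding of $M$ inducing $L$ be linearly normal, equivalently that $(M,L)$ admit no isomorphic projection into a smaller projective space; this is a nontrivial statement about the polarized pair which your argument assumes rather than proves, and it cannot follow formally from linear normality of $\Phi_{|L|}$ alone: classically, the Veronese surface admits an isomorphic projection into $\mathbb{P}^{4}$ (Severi's surface), an embedding of $(\mathbb{P}^2,\mathcal{O}_{\mathbb{P}^2}(2))$ which is not linearly normal, so the property does not propagate from the complete embedding to all embeddings. The paper's treatment of the minimality direction is different: it introduces the minimal codimension $r_m$, observes that a minimizing embedding is nondegenerate, deduces $r_m\leq L^n-1$ from \cite[Prop. 0]{EH1}, and then identifies $r_m=L^n-1$ by appealing to the classification of smooth projective varieties of minimal degree \cite[Thm. 1]{EH1}, rather than attempting to extract it from linear normality; your proposal needs either that citation-based identification or an independent argument excluding isomorphic projections, and as written it supplies neither.
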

\begin{proof}
Let $(M,L)$ be any pair in Theorem \ref{secondresult} and $r_m$ the desired \emph{minimal} codimension.

This means that there exists an embedding $M\overset{i}{\hooklongrightarrow}\mathbb{P}^{n+r_m}$ with $i^{\ast}\big(\mathcal{O}_{\mathbb{P}^{n+r_m}}(1)\big)=L$
and $M$ is not contained in any hyperplane of $\mathbb{P}^{n+r_m}$.

A general fact tells us that in this case the codimension $r_m\leq L^n-1$ (\cite[Prop. 0]{EH1}). However, in our four cases the pairs $(M,L)$ exactly satisfy $r_m=L^n-1$ and are called smooth projective varieties of \emph{minimal degree} (\cite[Thm. 1]{EH1}), a fact traced back to del Pezzo and Bertini, and a new and modern treatment was presented in \cite{EH1}.

It suffices to show that the codimension of the Kodaira map induced by the very ample line bundle $L$ is exactly $L^n-1$. Indeed, the Kodaira map of $(M,L)$ is of the following form (\cite[p. 176]{GH}):
\be M\hooklongrightarrow
\mathbb{P}\big(H^0(M,L)^{\ast}\big)
\cong\mathbb{P}^{\text{dim}_{\mathbb{C}}H^0(M,L)-1}.\nonumber\ee
Note that for these $L$ we have $\Delta(L)=0$ and so (\ref{delta genus}) tells us that
$$\text{dim}_{\mathbb{C}}H^0(M,L)-1=n+L^n-1$$
and hence the codimension is exactly $L^n-1$. This completes this lemma and hence the whole proof of Theorem \ref{secondresult}.
\end{proof}
%An $n$-dimensional smooth projective variety $M\hooklongrightarrow\mathbb{P}^{n+p}$ is called \emph{nondegenerate} if it , i.e., the codimension $p$ is the least number such that $M$ can be embedded in $\mathbb{P}^{n+p}$. In this case it is well-known that the degree of $M$ satisfies $d(M)\geq 1+p$ (\cite[p. 3]{EH1}). This $M$ is called \emph{of minimal degree} if it is nondegenerate and $d(M)=1+p$.The classification of varieties of minimal degrees can be traced back to Del Pezzo ($n=2$) and Bertini ($n\geq3$), and a new and modern proof was given in \cite{EH1}. Fujita further showed that these varieties of minimal degrees coincide with those with $\Delta$-genus zero. Precisely, we have the following classification theorem (\cite[Thm. 1]{EH1},).The very ampleness of $\mathcal{O}_{\mathbb{P}(\varepsilon)}(1)$ thus induces via the Kodaira map an embedding:\be\mathbb{P}(\varepsilon)\hooklongrightarrow\mathbb{P}\Big(H^0\big(\mathbb{P}(\varepsilon),\mathcal{O}_{\mathbb{P}(\varepsilon)}(1)\big)^{\ast}\Big)\cong\mathbb{P}^{n-1+\sum_{i=1}^na_i},\nonumber\eewhose degree is \be\label{degree of scroll}d\big(\mathbb{P}(\varepsilon)\big)=\sum_{i=1}^na_i.\nonumber\ee

\section{Proof of Corollaries \ref{thirdresult} and \ref{fifthresult}}\label{section4.3}
\subsection{Proof of Corollary \ref{thirdresult}}
It suffices to show that the degree of $M$ in question is $1$ or $2$ respectively.

\begin{proof}
The inequality (\ref{inequality1}) is equivalent to
\be\label{inequality1-equiv}\underline{|\sigma|}^2<\frac{2n}
{d(M)}\ee
as (cf. \cite[Lemma 5]{LZ})
\begin{eqnarray}
\left\{ \begin{array}{ll}
|\sigma|^2_{L^2}=\underline{|\sigma|}^2\cdot\text{Vol}(M)\\
~\\
\text{Vol}(M)=d(M)\cdot\text{Vol}(\mathbb{P}^n).\\
\end{array} \right.\nonumber
\end{eqnarray}

Assume that the inequality (\ref{inequality1-equiv}) holds. This, together with (\ref{fundamentalformformula}), yields
\be\label{1}d(M)+g(L)<2.\ee
Again by Theorem \ref{Fujita1} the ampleness of $L$ implies that $g(L)\geq0$ and so the only solution to (\ref{1}) is $$\big(d(M),g(L)\big)=(1,0)$$
and so $M$ is isomorphic to the $n$-dimensional hyperplane $\mathbb{P}^n$.

If the equality case in (\ref{inequality1-equiv}) holds, then $d(M)+g(L)=2$,
whose only solution is $$(d(M),g(L))=(2,0),$$ from which the result follows.
\end{proof}

\subsection{Proof of Corollary \ref{fifthresult}}
Note that the degree of the rational normal scrolls in Theorem \ref{secondresult} is
$$d\big(S(a_1,\ldots,a_n)\big)=\sum_{i=1}^na_i\geq n,$$
with the equality achieved by $S(\underbrace{1,\ldots,1}_{n})$.
Accordingly their $\underline{|\sigma|}^2$, via (\ref{norm of scroll}), satisfies
$$\underline{\big|\sigma\big(S(a_1,\ldots,a_n)\big)\big|}^2\geq 2n-2,$$
also with the equality achieved by $S(1,\ldots,1)$.

It suffices to show that $$\big|\sigma\big(S(1,\ldots,1)\big)\big|^2\equiv 2n-2.$$ Indeed in this case the embedding of $S(1,\ldots,1)$ into $\mathbb{P}^{2n-1}$ induced by the Kodaira map of $L$ is the famous \emph{Segre embedding} (\cite[p. 52-53]{EH2}):
$$S(1,\ldots,1)\cong\mathbb{P}^{n-1}
\times\mathbb{P}^1\hooklongrightarrow\mathbb{P}^{2n-1}.$$ Furthermore the induced metric on $\mathbb{P}^{n-1}\times\mathbb{P}^1$ from $\mathbb{P}^{2n-1}$ is isometric to the product metric of the Fubini-Study metrics of constant holomorphic sectional curvature $1$ (\cite[p. 401]{CR}), say $(g_1,g_2)$. Recall that the (constant) scalar curvature of the Fubini-Study metric of constant holomorphic curvature $1$ of $\mathbb{P}^n$ is $n(n+1)$. So the scalar curvature of the product metric is
$$S_{(g_1,g_2)}
=S_{g_{1}}+S_{g_{2}}=(n-1)n+2=n^2-n+2$$
and thus by (\ref{4}) we have $|\sigma|^2\equiv 2n-2$. This completes the proof of Corollary \ref{fifthresult}.

\end{document}